\theoremstyle{plain}
 \newtheorem{thm}{Theorem}
 \newtheorem{lem}{Lemma}
 \newtheorem*{thm*}{Theorem}
 \newtheorem*{lem*}{Lemma}
 \newtheorem{cor}{Corollary}
 \newtheorem*{cor*}{Corollary}
 \newtheorem{prop}{Proposition}
\theoremstyle{definition}
  \newtheorem*{Remark*}{Remark}
\begin{document}

\title{$2$-stratifold groups have solvable Word Problem}

\author{J. C. G\'{o}mez-Larra\~{n}aga\thanks{Centro de
Investigaci\'{o}n en Matem\'{a}ticas, A.P. 402, Guanajuato 36000, Gto. M\'{e}xico. jcarlos@cimat.mx} \and F.
Gonz\'alez-Acu\~na\thanks{Instituto de Matem\'aticas, UNAM, 62210 Cuernavaca, Morelos,
M\'{e}xico and Centro de
Investigaci\'{o}n en Matem\'{a}ticas, A.P. 402, Guanajuato 36000,
Gto. M\'{e}xico. fico@math.unam.mx} \and Wolfgang
Heil\thanks{Department of Mathematics, Florida State University,
Tallahasee, FL 32306, USA. heil@math.fsu.edu}}
\date{}

\maketitle

\begin{center}
{\em To Professor Maria Teresa Lozano on the occasion of her 70th birthday}
\end{center}

\begin{abstract} $2$-stratifolds are a generalization of $2$-manifolds in that there are disjoint simple closed curves where several sheets meet. We show that the word problem for fundamental groups of $2$-stratifolds is solvable.\end{abstract}

\section{Introduction}  
    
Simple stratified spaces arise in Topological Data Analysis \cite{B},  \cite{CL}. A related class of $2$-complexes, called {\it $2$-foams}, has been defined and studied by Khovanov \cite{Ko} and Carter \cite{SC}. A special class of stratified spaces, called $2$-{\it stratifolds} have been introduced and some of their properties have been studied in \cite{GGH1}, \cite{GGH2}, \cite{GGH3} and similar spaces, called {\it multibranched surfaces}, have been investgated in \cite{MO}. A $2$-stratifold $X$ is a compact space with empty $0$-stratum and empty boundary and  contains a collection of finitely many disjoint simple closed curves, the components of the $1$-stratum $X^1$ of $X$, such that $X - X^1$ is a $2$-manifold and a neighborhood of each interval contained in $X^1$ consists of $n\geq 3$ sheets (the precise definition is given in section 2). A $2$-stratifold is essentially determined  by its associated labelled graph. In \cite{GGH1} it is shown that a simply connected $2$-stratifold is homotopy equivalent to a wedge of $2$-spheres and the simply connected $2$-stratifolds whose graph is a linear tree are classified. Furthermore an efficient algorithm (in terms of the associated graph) is developed for deciding whether a trivalent $2$-stratifold (where a neighborhood of each component $C$ of $X^1$ consists of $3$ sheets) is simply-connected and in \cite{GGH2} an efficient algorithm is given for deciding whether a given $2$-stratifold is homotopy equivalent to $S^2$.\\

Very few $2$-stratifolds occur as spines of closed $3$-manifolds. For example, fundamental groups of $3$-manifolds are residually finite, but there are simple $2$-stratifolds with non-residually finite fundamental group. Since $3$-manifold groups have solvable word problem (\cite{AFW}), the question arises whether this is true for $2$-stratifold groups. The main goal of this paper is to prove that this is indeed the case.


\section{Fundamental group of a graph of groups}

In this section we show that the word problem is solvable for fundamental groups of certain graphs of groups. A similar result for graphs of groupoids has been obtained by \cite{H}. Our proof for the graph of groups is more direct, using Serre's normal form. We first describe the fundamental group of a graph of groups $(G,\Gamma)$ following Serre \cite{Serre}. \\

A graph of groups $(G,\Gamma)$ consists of a graph $\Gamma$  with vertex set $vert\Gamma$ and (oriented) edge set $edge\Gamma$, an associated group $G_v$ to each $v\in vert\Gamma$ and a  group $G_e$ to each $e\in edge\Gamma$ such that $G_e =G_{\bar{e}}$, where  $\bar{e}$ is the inverse edge of $e$. (If $e\in \Gamma$, then $\bar{e}\in \Gamma$, $\bar{\bar{e}}=e$,  $e\neq \bar{e}$ and the initial edge $o(e)=t(\bar{e})$, the terminal edge of $\bar{e}$).  For each $e\in  edge\Gamma$  with terminal vertex $t(e)$ there is monomorphisms $\delta_{t(e)}:  G_e \to G_{t(e)}$. \\

The group $F(G,\Gamma )$ is generated by the groups $G_v$ ($v\in vert\Gamma$) and $edge\Gamma$, subject to the relations 

$\bar{e}=e^{-1}$ and  $e\delta_{t(e)} (a) e^{-1}=\delta_{o(e)} (a)$, for each edge $e\in edge\Gamma$ with initial edge $o(e)$ and terminal edge $t(e)$ and $a\in G_e$.\\

For a fixed vertex $v_0$, the fundamental group $\pi_1 (G,\Gamma ,v_0 )$ of the graph of groups $(G,\Gamma)$ is the subgroup of $F(G,\Gamma )$ generated by all words\\ 

$\omega =r_0 e_1 r_1 e_2 \dots e_n r_n$\\

where $v_0 \stackrel{e_1} - v_1 \stackrel{e_2} - v_2 - \dots \stackrel{e_n} - v_n$ is an edge path with initial and terminal vertex $v_0 =v_n$ (i.e. a cycle based at $v_0$) and $r_i \in G_{v_i}$.\\

The word $\omega =r_0 e_1 r_1 e_2 \dots e_n r_n$ of length $n$ is reduced, if

for $n=0$, $r_0 \neq 1 \in G_{v_0}$;

for $n\geq 2$, $r_i \not\in \delta_{t(e_{i})}(G_{e_i})$, for each index $i$ such that $e_{i+1}=\bar{e_i}$ (backtracking at vertex $v_i$). \\

Serre proves (\cite{Serre} Theorem 11):\\

If $\omega\in\pi_1 (G,\Gamma ,v_0 )$ is a reduced word then $\omega \neq 1$ in $\pi_1 (G,\Gamma ,v_0 )$.\\

\begin{thm} \label{graphgrps} Let $(G,\Gamma)$ be a graph of groups with finite graph $\Gamma$. Suppose that\\
(i) The word problem for each vertex group $G_v$ and each edge group $G_e$ is solvable.\\
(ii) For each edge $e$ of $G$, the membership problem with respect to $\delta_{t(e)} (G_e )$ is solvable in $G_{t(e)}$.\\
Then $\pi_1 (G,\Gamma ,v_0 )$ has a solvable word problem.

 \end{thm}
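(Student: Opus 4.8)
The plan is to invoke Serre's normal form theorem quoted above: a reduced word representing an element of $\pi_1(G,\Gamma,v_0)$ is never trivial. It therefore suffices to describe an algorithm that, given a word $\omega = r_0 e_1 r_1 e_2 \cdots e_n r_n$ of the form in Serre's description (so $e_1 \cdots e_n$ is a closed edge path at $v_0$ and each $r_i$ is presented as a word in generators of $G_{v_i}$), either certifies $\omega = 1$ or replaces it by an equal, strictly shorter word of the same form in $\pi_1(G,\Gamma,v_0)$. Iterating drives $n$ down until the word is reduced, at which point Serre's theorem, together with the word problem in the single group $G_{v_0}$, decides triviality.

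The shortening step is exactly the one furnished by the defining relations. If $n=0$ we test $r_0 \stackrel{?}{=} 1$ in $G_{v_0}$ using (i); if $n=1$ the word is vacuously reduced and we output $\omega \neq 1$. If $n \geq 2$, then for each $i \in \{1,\dots,n-1\}$ we check syntactically whether $e_{i+1} = \bar e_i$, and when it is, we use (ii) to decide whether $r_i \in \delta_{t(e_i)}(G_{e_i})$. If no index $i$ passes both tests the word is reduced and we stop with $\omega \neq 1$. Otherwise, at the first such $i$ we produce $a \in G_{e_i}$ with $\delta_{t(e_i)}(a) = r_i$ and apply $e_i\,\delta_{t(e_i)}(a)\,e_i^{-1} = \delta_{o(e_i)}(a)$ to rewrite the subword $r_{i-1}\,e_i\,r_i\,e_{i+1}\,r_{i+1}$ as $r_{i-1}\,\delta_{o(e_i)}(a)\,r_{i+1}$. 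Because $e_{i+1}=\bar e_i$ gives $v_{i+1}=t(e_{i+1})=o(e_i)=v_{i-1}$, this last expression lies in $G_{v_{i-1}}=G_{v_{i+1}}$ and, being a product of words in the generators of that group, can be recorded as a single $r$-letter; the resulting word is again of Serre's form, represents the same element of $\pi_1(G,\Gamma,v_0)$ (we have only deleted a backtrack $e_i\bar e_i$ from the underlying loop), and has $n$ decreased by $2$. Hence the procedure terminates.

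Effectiveness of each step comes straight from the hypotheses: the syntactic tests are trivial, the membership queries are handled by (ii), and all equality tests inside vertex and edge groups by (i). Correctness is immediate: every rewrite is an identity in $F(G,\Gamma)$, so the represented element is unchanged; if we halt with a reduced word of length $\geq 1$, Serre's theorem gives $\omega \neq 1$; and if we halt with $n=0$, the word problem in $G_{v_0}$ is decisive since $G_{v_0}$ embeds in $F(G,\Gamma)$ (the length-$0$ case of Serre's theorem). The one point that deserves a line of justification is the passage from knowing $r_i \in \delta_{t(e_i)}(G_{e_i})$ to an explicit preimage $a$: a membership algorithm for $\delta_{t(e_i)}(G_{e_i})$ ordinarily expresses $r_i$ as a product of the generators of that subgroup, whence $a$ reads off directly; failing that, one enumerates words in the generators of $G_{e_i}$, computes their $\delta_{t(e_i)}$-images, and matches against $r_i$ via the word problem in $G_{t(e_i)}$, a search that halts because a preimage exists (this uses only the mild, implicit assumption that the edge groups are finitely generated and the $\delta$'s are computable on generators).

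So the theorem reduces to organizing the above rather than to any hard estimate; the places that merit care are the constructive form of hypothesis (ii) used in the shortening step and, if one prefers to start from an arbitrary word in the generating set $\bigl(\bigcup_{v} G_v\bigr)\cup edge\Gamma$ of $\pi_1(G,\Gamma,v_0)$ instead of from a word already in Serre's form, a preliminary normalization checking that the edge-letters spell a closed path at $v_0$ and regrouping the vertex-letters accordingly.
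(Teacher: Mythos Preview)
Your proof is correct and follows essentially the same approach as the paper's: reduce the length of a non-reduced word by two using the defining relation $e\,\delta_{t(e)}(a)\,e^{-1}=\delta_{o(e)}(a)$ at a backtracking index, appeal to Serre's theorem once the word is reduced, and handle $n=0$ via the word problem in $G_{v_0}$. You are in fact slightly more careful than the paper in justifying how one effectively obtains the preimage $a\in G_{e_i}$ from the membership oracle.
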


\begin{proof} Let $g \in \pi_1 (G,\Gamma ,v_0 )$ be represented by $\omega =r_0 e_1 r_1 e_2 \dots e_n r_n$, a word of length $n$.\\

If $n=0$ then $g=1$ if and only if $r_0 =1$ in $G_{v_0}$ and by (i) we can effectively decide whether this is the case.

If $n=1$ then $\omega$ is reduced and so $g\neq 1$.\\

If $n\geq 2$  we check if there is backtracking at $v_i$. If there is no backtracking at each $i=1,\dots ,n-1$, then $\omega$ is reduced and $g\neq 1$.

If there is backtracking at $v_i$ then by (ii) we can effectively check whether $r_i \in \delta_{t(e_{i})}(G_{e_i})$. If this is the case we find $a\in G_{e_i}$ such that $\delta_{t(e_{i})}(a)=r_i$. Then  $e_i r_i e_{i+1}=\delta_{o(e_{i})}(a)\in G_{v_{i-1}}$ and we replace $\omega =\dots r_{i-1}e_i r_i e_{i+1}r_{i+1} \dots $ by $\omega' =\dots (r_{i-1}\delta_{o(e_{i})}(a)r_{i+1}) \dots $, a word of length $n-2$ which represents the same $g\in \pi_1 (G,\Gamma ,v_0 )$. \\

Therefore we can effectively decide whether the word $\omega$ of length $n\geq 2$ representing $g$ is  reduced and, if $\omega$ is not  reduced, effectively find a word of length $n - 2$ representing the same $g$.

\end{proof}

\section{The graph of a $2$-stratifold.}

We first review the definition of a $2$-stratifold $X$ and its associated graph $G_X$ given in \cite{GGH1}. A  $2$-{\it stratifold} is a compact, Hausdorff space $X$ that contains a closed (possibly disconnected) $1$-manifold $X^1$ as a closed subspace with the following property: Each  point $x\in X^1$  has a neighborhood homeomorphic to $\mathbb{R}{\times}CL$, where $CL$ is the open cone on $L$ for some (finite) set $L$ of cardinality $>2$  and $X - X^1$ is a (possibly disconnected) open $2$-manifold.\\

A component $C\approx S^1$ of $X^1$ has a regular neighborhood $N(C)= N_{\pi}(C)$ that is homeomorphic to $(Y {\times}[0,1]) /(y,1)\sim (h(y),0)$, where $Y$ is the closed cone on the discrete space $\{1,2,...,d\}$ and $h:Y\to Y$ is a homeomorphism whose restriction to $\{1,2,...,d\}$ is the permutation $\pi:\{1,2,...,d\}\to  \{1,2,...,d\}$. The space $N_{\pi}(C)$ depends only on the conjugacy class of $\pi \in S_d$ and therefore is determined by a partition of $d$. A component of $\partial N_{\pi}(C)$ corresponds then to a summand of the partition determined by $\pi$. Here the neighborhoods $N(C)$ are chosen sufficiently small so that for disjoint components $C$ and $C'$ of $X_1$, $N(C)$ is disjoint from $N(C' )$. The components of $\overline{N(C)-C}$ are called the {\it sheets} of $N(C)$.\\

The associated labelled graph $G=G_X$ of a given $2$-stratifold $X=X_G$ is a bipartite graph with black vertices and labelled white vertices and edges. The white vertices $w$ of $G_X$ are the components $W$ of $M:=\overline{X-\cup_j N(C_j)}$ where $C_j$  runs over the components of $X^1$;  the black vertices $b_j$ are the $C_j$'s. An edge $e$ corresponds to a component $S$ of $\partial M$; it joins a white vertex $w$ corresponding to $W$ with a black vertex $b$ corresponding to $C_j $ if $S=W\cap N (C_j)$. Note that the number of boundary components of $W$ is the number of adjacent edges of $W$. \\

The label assigned to a white vertex $W$ is its genus $g$;  the label of an edge $e$ is an integer $m$, where $|m|$ is the summand of the partition $\pi$ corresponding to the component $S\subset \partial N_{\pi}(C)$ and the sign of $m$ is determined by an orientation of $C_j$ and $S$. (Here we use Neumann's \cite{N} convention of assinging negative genus $g$ to nonorientable surfaces; for example the genus $g$ of the projective plane or the Moebius band is $-1$, the genus of the Klein bottle is $-2$).  Note that the partition $\pi$ of a black vertex is determined by the labels of the adjacent edges. If all white vertices have labels $g<0$ or if $G$ is a tree, then the labeled graph determines $X$ uniquely. \\

\section{Natural presentation of $\pi_1 (X_G )$}

In this section we describe a natural presentation for the fundamental group of a $2$-stratifold $X$. First we fix a notation.\\

\noindent $X=X_G$ is a $2$- stratifold  with 
associated bipartite graph $G=G_X$.\\

\noindent $N(C_{b_j})$ is a regular neighborhood of $C_{b_j}$, a component of $X^{(1)}$ corresponding to the black vertex $b_j$ of $G_X$\\

\noindent $W_i$ is a component of $M=\overline{X-\cup_j N(C_j)}$ corresponding to the white vertex $w_i$ of $G_X$\\

\noindent $c_{ijk}$ are the components of $W_i \cap N(C_j )$ corresponding to the edges $e_{ijk}$ of $G_X$\\


\noindent For a given white vertex $w$, the compact $2$-manifold $W$ has conveniently oriented boundary curves $c_1 ,\dots , c_p$ such that\\

 (*) \qquad $\pi_1 (W)=\langle c_1 ,\dots, c_p , y_1 ,\dots , y_n : c_1 \cdots c_p \cdot q =1 \rangle $\\
 
 \noindent  where $q=[y_1 ,y_2 ]\dots [y_{2g-1},y_{2g}]$,  if $W$ is orientable of genus $g$ and $n=2g$,
 
\noindent $q=y_1^2 \dots y_n^2$,  if $W$ is non-orientable of genus $-n$.\\

\noindent Let $\mathcal{B}$ be the set of black vertices, $\mathcal{W}$ the set of white vertices and choose a fixed maximal tree $T$ of $G$.\\ 

\noindent We choose  orientations of the black vertices and of all boundary components of $M$ such that all labels of edges in $T$ are positive.\\

Then $\pi_1 (X_G )$ has a natural presentation with  \\

 \noindent \begin{tabular}{rl}
 generators: & $\{b\}_{b\in \mathcal{B}}$    \\
 
    & $\{c_1 ,\dots, c_p , y_1 ,\dots , y_n\}$, one set for each $w\in \mathcal{W}$,  as in $(*)$\\
   
   &  $\{t_i\}$, one $t_i$ for each  edge $c_i \in G-T$ between $w$ and $b$ \\
   
\end{tabular} \\

\noindent \begin{tabular}{rl}
and relations: & $c_1 \cdots c_p \cdot q =1$, one for each $w\in \mathcal{W}$,  as in $(*)$\\

&  $b^m =c_i$, for each edge $c_i \in T$ between $w$ and $b$ with label $m\geq 1$ \\
& (corresponding to $W \cap N(C_b )$)\\

& $t_i^{-1}c_i t_i=b^{m_i}$, for each edge $c_i \in G-T$ between $w$ and $b$ with label $m_i \in \mathbb{Z}$. \\
\end{tabular} \\

\section{The graph of groups of $X_G$}

Let $X=X_G$ be the 2-stratifold associated to the labeled  graph G; we assume a maximal tree $T$ of $G$ is given and the labels of edges of $T$ are positive, so the labeling is unique. 
We first define a graph of CW-complexes as in \cite{SW}, with underlying graph that of $G$. \\

For a black vertex $b$ representing a singular oriented circle $C_b$, let $o(b)$ be the order of $C_b$ in $\pi_1 (X_G)$. Note that, if $e$ is an edge joining a black vertex $b$ to a white vertex $w$  and the label of $e$ is $m$, then $e$ represents an oriented circle $c$ of $\partial W$ whose order in $\pi_1 (X_G)$ is  $k=o(b)/(o(b),m)$. Here $(o(b),m)$ denotes the greatest common divisor of $o(b)$ and $m$. (If $o(b)=0$, then $(o(b),m)=m$).\\

Construct a space ${\hat X}$ from $X$ by attaching disks as follows:

If $b$ is a black vertex of order $o(b)\geq 1$, attach a 2-cell $d_b$ to $C_b$ with degree $o(b)$ (i.e. attach a disk under the attaching map $z\to z^{o(b)}$). If $e$ is an edge joining $b$ to $w$ with label $m$ and $o(b)\geq 1$, attach to $c$ a 2-cell $d_e$ with degree $k=o(b)/(o(b),m)$. If $o(b)=0$, do not attach  2-cells). Note that \\

$\pi(\hat{X}) = \pi(X_G)$.\\

The graph of spaces associated to $\hat{X}$ has the same underlying graph as $G_X$, with vertices $\hat{X}_b$, $\hat{X}_w$, and edges $\hat{X}_e$, defined as follows:\\
 
$\hat{X}_b$: For a black vertex $b$ of $G$, $\hat{X}_b = N(C_b) \cup d_b \cup ( \cup d_e )$, where $e$ runs over the edges having $b$ as an endpoint. 

$\hat{X}_e$: For a white vertex $w$ of $G$ let $\hat{X}_w$,  $W\cup (\cup d_e )$, where $e$ runs over the edges incident to $w$. (Recall that there is one such edge for each boundary curve $c$ of $W$).

$\hat{X}_e$: For an edge $e$  joining $b$ to $w$, $\hat{X}_e = (\hat{X}_b \cap \hat{X}_w)$. (If $o(b)\geq 1$ this is a pseudo-projective plane of degree $k=o(b)/(o(b),m)$).\\

Since $\hat{X}_b$, $ \hat{X}_w$ and $\hat{X}_e$ are path-connected and the  inclusion-induced homomorphisms $\pi_1 (\hat{X}_e) \to \pi_1 (X_b)$ and $\pi_1 (\hat{X}_e) \to \pi_1 (\hat{X}_w)$
are injective, this graph of spaces determines a graph of groups $\mathcal{G}=\{G_b ,G_e ,G_w \}$ 
(with the same underlying graph as $G_X$). The vertex groups are $G_b = \pi_1 (\hat{X}_b)$ and $G_w = \pi_1 (\hat{X}_w)$, the edge groups are $G_e = \pi_1 (\hat{X}_e)$, the monomorphisms $\delta: G_e \to G_b$ (resp. $G_e \to G_w$ are induced by inclusion. Then (see for example \cite{SW},\cite{Serre})\\

 $\pi_1 \mathcal{G}\cong \pi_1 (\hat{X})$ \\

Note that the groups $G_b$ of the black vertices and the groups $G_e$ of the edges are cyclic. For a white vertex $w$ with edges $e_1 ,\dots e_p$ labelled $m_1 ,\dots m_p$ with associated edge space $X_w =W\cup_{i=1}^r d_{e_i}$ we have \\

$G_w =\pi_1 (\hat{X}_w )=\langle c_1 ,\dots, c_p , y_1 ,\dots , y_n : c_1 \cdots c_p \cdot q =1\,,\,c_1^{k_1}=\dots =c_1^{k_r}= 1\,\,(1\leq r\leq p)  \, \rangle $.\\

If all $k_i \geq 2$ and $r=p$ then $G_w$ is an $F$-group (\cite{LS} p. 126-127), otherwise it is a free product of cyclic groups.


\section{The Word Problem for Fundamental groups of $2$-stratifolds}

It is well-known that free groups have solvable membership problem with respect to cyclic subgroups. More generally it follows from the Proposition below, which is Corollary 4.16 of \cite{AFW}, that free products of cyclic groups have solvable membership problems. 

\begin{prop} \label{freeproducts} Solvability of the membership problem is preserved under taking free products.
\end{prop}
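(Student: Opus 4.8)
The plan is to establish this by an effective form of the Kurosh subgroup theorem, carried out via Stallings-style folding of labeled graphs over a free product. By writing $G_1 * \cdots * G_k = G_1 * (G_2 * \cdots * G_k)$ and inducting on $k$, it suffices to treat a free product $G = A * B$ of two groups, each with solvable membership problem; in the inductive step the second factor $G_2 * \cdots * G_k$ is supplied with solvable membership problem by the inductive hypothesis, so the two-factor case really is all that is needed. Taking the trivial subgroup shows each of $A$ and $B$ has solvable word problem, hence the normal-form algorithm in $A * B$ is available: every element has a computable unique reduced expression $g = s_1 s_2 \cdots s_m$ with nontrivial syllables $s_i$ lying alternately in $A$ and $B$.

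Given $H = \langle h_1, \dots, h_n \rangle$ and a target $g$, presented by words in $A \sqcup B$, I would encode $H$ by a finite based graph $\Gamma_0$ --- a wedge of $n$ loops at a basepoint $\ast$, the $i$-th loop subdivided so that its edges, read in order, spell the normal form of $h_i$, each edge carrying a single syllable as label. Such an $(A*B)$-labeled based graph determines a subgroup of $A*B$, namely the one generated by the loop-labels at $\ast$; for $\Gamma_0$ this subgroup is $H$. I would then run a folding procedure that repeatedly simplifies $\Gamma$ without changing the subgroup it determines: whenever a vertex has two incident edges whose labels lie in the same factor, they are folded into one, the discrepancy being pushed to the far endpoint; at each vertex one maintains a finitely generated subgroup of $A$ (or of $B$) generated by the loop-labels that return to that vertex within the factor, using the membership algorithm of the factor to test whether newly produced elements already lie in that subgroup and thereby to keep the data finite. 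Termination follows from a complexity (for instance the number of edges) that strictly decreases and is bounded below. When no fold applies, a Stallings-type argument identifies the result with the core of the quotient graph of groups for the action of $H$ on the Bass--Serre tree of $A*B$: a finite graph whose vertex groups are explicitly computed finitely generated subgroups of conjugates of $A$ or $B$, with $H$ its fundamental group --- an effective Kurosh decomposition.

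To decide $g \in H$, I would compute the normal form $s_1 \cdots s_m$ of $g$ and trace it through the folded based graph starting at $\ast$: at each stage one sits at a vertex with an accumulated vertex group $V$ and a remaining suffix of syllables, and the membership algorithm of the relevant factor determines, from $V$ and the labels of incident edges, whether and along which edge the trace continues, exactly as one reads a word through a Stallings subgroup graph; then $g \in H$ if and only if the trace is defined throughout, returns to $\ast$, and the leftover vertex-group element at $\ast$ is trivial. The main obstacle is the folding step itself: one must check that finitely many membership queries in $A$ and $B$ always suffice both to perform each fold and to certify that the graph is fully folded, that the procedure terminates, and --- the Stallings-type correctness --- that the folded based graph represents exactly $H$, so that the final trace is a sound membership test. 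Granting this, the reduction $G_1 * \cdots * G_k = G_1 * (G_2 * \cdots * G_k)$ closes the induction.
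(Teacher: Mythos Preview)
The paper does not actually prove this proposition: it is simply quoted as Corollary~4.16 of \cite{AFW}. So there is no in-paper argument to compare against; your sketch supplies a proof where the paper gives only a citation.

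Your approach is the standard one and is correct in outline. What you are describing is precisely the Stallings--folding method for subgroups of free products with trivial edge groups, as developed for instance by Kapovich--Weidmann--Myasnikov. The reduction to two factors by associativity is fine; the normal-form algorithm in $A*B$ is available once $A$ and $B$ have solvable word problem (which follows from membership for the trivial subgroup, as you note); and the folded graph is exactly the core of the quotient of the Bass--Serre tree by $H$, giving an effective Kurosh decomposition. The trace step at the end uses only membership queries in finitely generated subgroups of $A$ or of $B$, which is what the hypothesis supplies. The points you flag as the ``main obstacle'' --- termination of folding, finiteness of the vertex-group data, and correctness of the final trace --- are genuine but are all handled in the literature; none of them fails under the stated hypotheses.

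One small caution: in the Bass--Serre tree for $A*B$ the vertices come in two types ($A$-vertices and $B$-vertices), so in your labeled graph each vertex should carry a subgroup of a single factor, not both; your description blurs this slightly but it does not affect the argument. With that adjustment the sketch is sound and, once the cited folding machinery is invoked, complete.
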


 We are interested in the membership problem of free products with amalgamation with respect to cyclic groups and give an elementary proof of the following

\begin{lem} \label{cyclicsubgrps} Let $G=A*_C B$. Assume that $C$ has solvable membership problem with respect to cyclic subgroups and $A$ and $B$ have solvable membership problem with respect to the subgroup $C$. Then $G$ has solvable membership problem with respect to cyclic subgroups.
\end{lem}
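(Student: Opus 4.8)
The plan is to decide, given $g \in G = A *_C B$ and a generator $h$, whether $g \in \langle h \rangle$, by working with the normal form for free products with amalgamation (Britton-type reduced sequences). The first case to dispose of is $h \in C$ (after conjugation this includes the possibility that $h$ lies in a conjugate of a factor): here a cyclic subgroup $\langle h \rangle \le C$ has solvable membership problem in $C$ by hypothesis, and one must decide whether $g$ — which a priori is given as a word in $A$ and $B$ — actually lies in $C$ and, if so, in $\langle h \rangle$. Deciding $g \in C$: put $g$ in reduced form; if its syllable length is $\ge 2$ it is not in any conjugate of a factor, in particular not in $C$; if it has length $1$, say $g \in A$, test membership of $g$ in $C$ using solvability of the membership problem for $C$ in $A$, then if it lies in $C$ test $g \in \langle h \rangle$ inside $C$.

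The main case is $h \notin$ (any conjugate of $C$), so $\langle h \rangle$ is infinite cyclic and, more importantly, $h$ either has syllable length $\ge 2$ in reduced form or is a length-one element not conjugate into $C$. The key structural fact I would invoke is that the cyclic subgroup $\langle h \rangle$ is then an \emph{undistorted, malnormal-ish} subgroup whose elements have controlled reduced length: if $h$ (cyclically reduced) has cyclic syllable length $\ell \ge 1$, then $h^k$ has reduced length roughly $|k|\ell$ (up to a bounded correction from cancellation at the junctions, exactly as in the standard proof that elements of infinite order in an amalgam act hyperbolically on the Bass--Serre tree). Concretely: cyclically reduce $h$; compute the "period" data — the syllable sequence of the cyclically reduced core and the conjugating element. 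Given $g$, cyclically reduce it as well; then $g \in \langle h\rangle$ forces $g$ to be conjugate (by the same conjugator) to a power of the core, so the reduced length of $g$ pins down $|k|$ to at most a bounded set of candidate values $k$, and for each candidate we test the equation $g = h^k$ in $G$. That last test is just the word problem in $G = A *_C B$, which is solvable because $A$, $B$ have solvable word problem (a consequence of solvable membership problem for $C$, since deciding $w = 1$ reduces to checking whether a reduced form has length $0$ and then whether the resulting element of $C$, via the membership test, is trivial) and $C$ has solvable membership problem with respect to cyclic subgroups (in particular the trivial subgroup), so the word problem for $C$ is solvable; hence the amalgam has solvable word problem by the standard reduction-to-normal-form algorithm.

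So the algorithm is: (1) decide whether $h$ is conjugate into $C$ and handle that finite/torsion case via the $C$-membership hypotheses; (2) otherwise compute the cyclically-reduced core of $h$ and its length $\ell$, compute the cyclically-reduced length $L$ of $g$, derive a finite list of candidate exponents $k$ with $|k|\ell$ within a bounded distance of $L$ (including the sign ambiguity and the small cases $L \le \ell$), and (3) for each candidate $k$ test $g h^{-k} = 1$ using the word problem in $G$. Output "yes" iff one test succeeds. All the subroutines — reduced forms, cyclic reduction, the word problem in $A$, $B$, $C$, and the membership tests for $C$ — are available from the hypotheses and from Proposition~\ref{freeproducts}'s ambient machinery.

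The hard part is step (2): making precise and algorithmic the claim that the reduced length of a power $h^k$ grows linearly and that from $g$'s reduced length one recovers a \emph{bounded} set of possible $k$. One must handle carefully (a) the interaction between the conjugator bringing $h$ to cyclically reduced form and the conjugator for $g$ — if they differ, $g$ simply is not a power of $h$ and this must be detectable from the normal forms; (b) the amount of cancellation when multiplying $k$ copies of the cyclically reduced core, which can merge adjacent syllables inside $C$ but cannot collapse the length below a linear function of $k$ (this is exactly where we use that the core is cyclically reduced, so consecutive syllables lie in different factors and do not both lie in $C$); and (c) the small-length exceptional cases where $g$ itself might lie in a conjugate of a factor even though $h$ does not — there $g \in \langle h\rangle$ can only happen for $g = 1$, $k = 0$. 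Getting these bounds stated cleanly, rather than the routine verification once they are stated, is where the real work lies.
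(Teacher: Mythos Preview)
Your approach---normal forms, cyclic reduction of $h$, linear growth of syllable length under powers, then testing finitely many candidate exponents via the word problem---is exactly the paper's strategy, and for cyclically reduced $h$ of syllable length $\ell \ge 2$ (the hyperbolic case on the Bass--Serre tree) it works; your hedging about ``bounded correction'' is even unnecessary there, since $l(h^k)=|k|\,\ell$ exactly.

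The gap is the case $\ell = 1$, which you explicitly place in your ``main case'': if $h \in A \smallsetminus C$ (up to conjugacy) then $h$ is elliptic, $h^k \in A$ for every $k$, so $l(h^k) \le 1$ for all $k$ and the length yields \emph{no} bound on the exponent---your candidate set is infinite. Your parenthetical that the first case already covers ``$h$ in a conjugate of a factor'' does not rescue this: conjugating into $A$ is not the same as conjugating into $C$, and the only cyclic-membership hypothesis available is the one inside $C$. So for $h \in A \smallsetminus C$ and $g \in A$ you are left needing to decide $g \in \langle h\rangle$ inside $A$, and nothing in the stated hypotheses supplies that. The paper's own proof shares this lacuna (its assertion $l(t^k)=k\,l(t)$ fails when $l(t)=1$); the downstream corollaries survive because in every application $A$ and $B$ are free or free products of cyclic groups and hence do have solvable membership for cyclic subgroups, but neither the lemma as stated nor your argument for it goes through on the given hypotheses alone.
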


\begin{proof} For $g, g' \in A$ or $B$ we can decide whether $g(g')^{-1}$ is in  $C$. Therefore, given $g\in G$ and a fixed choice of right coset representatives of $C$ in $A$ (resp. in $B$) we can effectively find the (unique) reduced normal form $w=g_1 \dots g_n c$ of $g$, where $g_i \in A$ or $B$ are the  chosen representatives of the right cosets $g_i C$, $c\in C$,  and $g_i $, $g_{i+1}$ are in different subgroups $A$, $B$, for $i=1,\dots , n-1$. The length of $g$ is $l(g)=l(w)=n$. In particular, $l(w)=0$ iff $g\in C$. Also, if $w$ is not cyclically reduced (i.e. $g_1$ and $g_n$ are in the same subgroup $A$ or $B$), then we can effectively reduce $w$ to a cyclically reduced word.

Let $t\in G$ of length $l(t)\geq 0$ generate an infinite cyclic subgroup  $\langle t \rangle\subset G$ and let $g\in G$. Now $w\in \langle t \rangle$ if and only if $w=t^k$ for some $|k|\geq 1$. Since $w\in \langle t \rangle$ iff $w^{-1}\in\langle t \rangle$ we may assume $k\geq 1$. If $l(t)=0$ then $l(w)=0$ and the result follows since $C$ has solvable membership problem with respect to cyclic groups. Thus assume $l(t)\geq1$.

If the word $t$ is cyclically reduced then $l(t^k )=kl(t)$. Thus  there is a unique $k$ such that $l(w)=kl(t)$ and we can effectively check whether the reduced words $w$ and $t^k$ agree.

If $t$ is not cyclically reduced then $t=uru^{-1}$ for some reduced word $r$ and cyclically reduced word $r$. Then $w\in \langle t \rangle$ iff $u^{-1}wu=r^k$ for some $k$. We effectively find the reduced word $w'$ representing $u^{-1}wu$ and (by the above argument) effectively determine whether $w'=r^k$.
\end{proof}
\begin{cor} Let $G$ be a free product of cyclic groups or a free product of two such groups amalgamated over a cyclic group. Then the membership problem with respect to cyclic subgroups is solvable.
\end{cor}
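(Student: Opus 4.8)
The plan is to treat the two cases of the statement separately, invoking Proposition \ref{freeproducts} for the first and Lemma \ref{cyclicsubgrps} for the second. Suppose first that $G=G_1*\cdots *G_s$ is a (finite) free product of cyclic groups. Each $G_i$ is isomorphic to $\mathbb{Z}$ or to $\mathbb{Z}/n$, and such a group has solvable (generalized) membership problem: writing elements additively, $a\in\langle b_1,\dots ,b_k\rangle$ if and only if $a$ is divisible by $\gcd(b_1,\dots ,b_k)$ in $\mathbb{Z}$, respectively in $\mathbb{Z}/n$, which is an effective test. By Proposition \ref{freeproducts} the membership problem is then solvable for $G$ itself, and in particular the membership problem of $G$ with respect to cyclic subgroups is solvable. (This is the remark already made in the text preceding Proposition \ref{freeproducts}.)

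Now suppose $G=A*_C B$, where $A$ and $B$ are free products of cyclic groups and $C$ is cyclic. I would verify the two hypotheses of Lemma \ref{cyclicsubgrps}. For the first, $C$ is cyclic, so its cyclic subgroups are generated by powers of a fixed generator of $C$, and deciding membership in such a subgroup reduces once more to the elementary divisibility test in $\mathbb{Z}$ or $\mathbb{Z}/n$ used above; hence $C$ has solvable membership problem with respect to cyclic subgroups. For the second, by the first paragraph $A$ (and likewise $B$) has solvable generalized membership problem; since the amalgamation comes equipped with an explicit monomorphism $C\hookrightarrow A$, we are handed a generator of $C$ as an element of $A$, so deciding whether a given $g\in A$ lies in $C$ is an instance of that membership problem. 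Thus $A$ and $B$ have solvable membership problem with respect to the subgroup $C$, and Lemma \ref{cyclicsubgrps} applies, yielding that $G$ has solvable membership problem with respect to cyclic subgroups.

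I do not expect a genuine obstacle here: the substantive work is already carried by Proposition \ref{freeproducts} and Lemma \ref{cyclicsubgrps}, and the corollary is essentially a bookkeeping step combining them. The two points that deserve a word of care are (i) that what one extracts from Proposition \ref{freeproducts} is the \emph{generalized} membership problem, uniform in the chosen finite generating set of the subgroup, which is precisely what is needed to recognize $C$ inside $A$ from its prescribed generator, whereas Lemma \ref{cyclicsubgrps} only requires the weaker fixed-subgroup version; and (ii) that in the graph-of-groups setup of section 5 the amalgamating subgroup is indeed given together with explicit inclusion maps, so "membership with respect to $C$" is a well-posed algorithmic question. Once these are noted, the proof is immediate.
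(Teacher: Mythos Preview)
Your proof is correct and follows exactly the paper's approach: the paper's proof is the single sentence ``This follows from Proposition \ref{freeproducts} and Lemma \ref{cyclicsubgrps}'', and you have simply spelled out the verification of the hypotheses that this sentence leaves implicit.
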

\begin{proof} This follows from Proposition \ref{freeproducts} and Lemma \ref{cyclicsubgrps}.
\end{proof}
\begin{cor} \label{Fgrps}$F$-groups have solvable membership problem with respect to cyclic subgroups.
\end{cor}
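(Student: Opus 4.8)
The plan is to use the classification of $F$-groups as fundamental groups of closed $2$-orbifolds, reduce the ``generic'' cases to the preceding Corollary by splitting $G$ over an infinite cyclic subgroup, and deal separately with a short explicit list of exceptions. Recall that an $F$-group has the form $G=\langle x_1,\dots ,x_p,y_1,\dots ,y_n\mid x_i^{k_i}=1,\ x_1\cdots x_p\,q=1\rangle$ with every $k_i\ge 2$ and $q=[y_1,y_2]\cdots[y_{2g-1},y_{2g}]$ (orientable, $n=2g$) or $q=y_1^2\cdots y_n^2$ (non-orientable, genus $-n$); thus $G=\pi_1\mathcal O$ for the closed $2$-orbifold $\mathcal O$ whose underlying surface has genus $g$ (resp.\ $-n$) and whose cone-point orders are $k_1,\dots ,k_p$. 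I would organize the argument according to the sign of the orbifold Euler characteristic $\chi^{\mathrm{orb}}(\mathcal O)$.

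If $\chi^{\mathrm{orb}}(\mathcal O)>0$ then $G$ is finite and its membership problem is trivially solvable. If $\chi^{\mathrm{orb}}(\mathcal O)=0$ then $G$ is a Euclidean crystallographic group, hence finitely generated and virtually $\mathbb Z^2$, and membership in any finitely generated (in particular cyclic) subgroup is decidable by elementary coset enumeration together with linear algebra over $\mathbb Z$; this disposes of the spherical cases, of $\mathbb Z^2$ and the Klein bottle group, and of the Euclidean triangle and $(2,2,2,2)$ groups.

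When $\chi^{\mathrm{orb}}(\mathcal O)<0$ and $\mathcal O$ carries an essential separating simple closed curve, cutting along it exhibits $G$ as an amalgamated free product over an infinite cyclic subgroup; iterating, the vertex pieces become disks or annuli with cone points, whose groups are free products of cyclic groups. Explicitly, in the orientable case with $g\ge 1$ and $p\ge 2$ the relation $x_1\cdots x_p\,q=1$ identifies the infinite cyclic subgroup generated by $u=x_1\cdots x_p$ in $A=\mathbb Z/k_1*\cdots*\mathbb Z/k_p$ with the infinite cyclic subgroup generated by $q$ in the free group $B=\langle y_1,\dots ,y_{2g}\rangle$, so that \[G\;\cong\;A*_{\langle u\rangle=\langle q^{-1}\rangle}B\] with both factors free products of cyclic groups; when $g=0$ and $p\ge 4$ one groups $u=(x_1x_2)(x_3\cdots x_p)$ and argues the same way, and the non-orientable cases (including the Klein bottle with one cone point, split along the curve that cuts off a M\"obius band) are analogous. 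In each such case Lemma \ref{cyclicsubgrps} and the preceding Corollary give the conclusion.

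The hard part is the short list of hyperbolic orbifolds with no essential separating curve, which the splitting above does not reach: the triangle groups $\Delta(k_1,k_2,k_3)$ with $\tfrac1{k_1}+\tfrac1{k_2}+\tfrac1{k_3}<1$ and the torus with one cone point, $\langle a,b\mid[a,b]^{k}\rangle$. One route for these (in the spirit of what precedes) is to prove an HNN analogue of Lemma \ref{cyclicsubgrps} and split the torus case along a nonseparating curve; but the cleanest option is to use that every closed $2$-orbifold is finitely covered by a closed surface (Bundgaard--Nielsen--Fox, or Selberg's lemma applied to $G\le\mathrm{PSL}(2,\mathbb R)$), so $G$ contains a surface group $S$ of finite index. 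Surface groups are subgroup separable (Scott) and have solvable word problem, and a finitely presented group that is subgroup separable with solvable word problem has solvable membership problem with respect to all finitely generated subgroups (enumerate the elements of the subgroup, comparing with the given element by the word problem, while simultaneously enumerating the finite-index subgroups that contain the subgroup but not the element). Since $G$ is finitely presented with solvable word problem and $[G:S]<\infty$, this passes from $S$ to $G$: given a cyclic $\langle t\rangle\le G$ and $g\in G$, compute by Reidemeister--Schreier a finite set of coset representatives of $\langle t\rangle\cap S$ in $\langle t\rangle$ and reduce $g\in\langle t\rangle$ to finitely many membership questions, for elements of $S$, in the finitely generated subgroup $\langle t\rangle\cap S$ of $S$. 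Combining the three regimes yields the Corollary.
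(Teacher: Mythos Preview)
Your argument is correct, but it takes a considerably heavier route than the paper's and in doing so actually patches cases the paper leaves unaddressed.

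The paper's proof is a direct two-case split. For $p\ge 1$ it sets $A=\langle c_1,\dots ,c_p\mid c_i^{k_i}\rangle$, $B=\langle y_1,\dots ,y_n\mid\ \rangle$, and $C$ the infinite cyclic group generated by $c_1\cdots c_p$ in $A$ (equivalently by $q^{-1}$ in $B$), writes $G=A*_C B$, and invokes Lemma~\ref{cyclicsubgrps} together with the preceding Corollary; for $p=0$ it splits the closed surface group as an amalgam of two free groups over a cyclic, handling the torus and Klein bottle separately. There is no orbifold Euler-characteristic trichotomy and no appeal to Selberg or to Scott's subgroup separability.

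Your organization by $\chi^{\mathrm{orb}}$ recovers exactly the paper's splitting in the range $n\ge 1$, $p\ge 2$, but you correctly observe that the single amalgam the paper writes down degenerates when $n=0$ (then $B=1$ and $q=1$, so $C$ does not embed in $B$) and when $p=1$ (then $c_1$ has finite order in $A$ while $q$ has infinite order in $B$); these are precisely your ``hard'' cases---hyperbolic triangle groups and the one-cone torus---together with some finite or Euclidean groups. For these you bring in the virtually-surface-group structure and LERF. That is correct and yields the full statement, but it is substantially more machinery than the paper intends: the paper's spirit is that one splitting plus Lemma~\ref{cyclicsubgrps} suffices, and it does not isolate the exceptional orbifolds at all. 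What your approach buys is a genuinely complete case analysis; what it costs is the elementary, self-contained character of the paper's argument.
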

\begin{proof} Let $G=\langle c_1 ,\dots, c_p , y_1 ,\dots , y_n : c_1 \cdots c_p \cdot q =1\,,\,c_1^{k_1}=\dots =c_1^{k_p}= 1\,\rangle $ be an $F$-group. 

If $p\geq 1$, let $A=\langle c_1 ,\dots, c_p :  c_1^{k_1}= \dots =c_1^{k_p}= 1\rangle$, $B= \langle y_1 ,\dots , y_n :   \, \rangle $,  $C$ the infinite cyclic group generated by $c_1 \cdots c_p$ in $A$ resp. by $q$ in $B$. Then $G=A*_C B$.

If $p=0$ then $G$ is the fundamental group of a closed surface of genus $g$. If $g\neq 1, -2$, then $G$ can be similarly written as a free product of two free groups with amalgamation over a cyclic group. If $g= 1, -2$ the result is trivial (in this case every element of $G$ has a normal form of length $\leq 2$).
\end{proof}
  
\begin{thm}  The fundamental group  of a $2$-stratifold has solvable word problem.
\end{thm}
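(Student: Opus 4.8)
The plan is to realize $\pi_1(X_G)$ as the fundamental group of a graph of groups to which Theorem \ref{graphgrps} applies, and then verify hypotheses (i) and (ii) of that theorem using the structural results of the previous two sections. Concretely, I would first replace $X_G$ by the space $\hat X$ obtained by coning off the appropriate singular circles and boundary curves; since $\pi_1(\hat X) = \pi_1(X_G)$, it suffices to work with the graph of groups $\mathcal{G} = \{G_b, G_e, G_w\}$ constructed in Section 6, whose underlying graph is the finite graph $G_X$ and which satisfies $\pi_1\mathcal{G} \cong \pi_1(\hat X)$. So the whole problem reduces to checking that $\mathcal{G}$ meets the two conditions of Theorem \ref{graphgrps}.

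For hypothesis (i), I need the word problem to be solvable in every vertex group and every edge group. The edge groups $G_e$ and the black-vertex groups $G_b$ are cyclic, so this is immediate. The white-vertex groups $G_w$ are, by the computation in Section 6, either $F$-groups or free products of cyclic groups; in either case they have a presentation as a free product of (finitely many) cyclic groups, or a free product of two such groups amalgamated over a cyclic group, and such groups have solvable word problem (the amalgamated case follows from the solvable membership problem for cyclic subgroups established in Lemma \ref{cyclicsubgrps} and its corollaries, which in particular let one compute reduced normal forms effectively). For hypothesis (ii), I must show that for each edge $e$ incident to a vertex $v$, the subgroup $\delta_{t(e)}(G_e)$ has solvable membership problem in $G_{t(e)}$. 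When $t(e)$ is a black vertex this is trivial since $G_b$ is cyclic. When $t(e)$ is a white vertex $w$, the image $\delta(G_e)$ is the cyclic subgroup of $G_w$ generated by the boundary curve $c_i$ (or a power of it), so what is needed is precisely that $G_w$ has solvable membership problem with respect to cyclic subgroups — and this is exactly the content of Corollary \ref{Fgrps} together with the preceding corollary for free products of cyclic groups.

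Having verified both hypotheses, Theorem \ref{graphgrps} yields that $\pi_1\mathcal{G} \cong \pi_1(X_G)$ has solvable word problem, completing the proof. The one point demanding care — the step I expect to be the main obstacle — is checking that the effective procedures line up: one must confirm that the maximal tree $T$ and the chosen orientations make all tree-edge labels positive (as arranged in Section 5), that the monomorphisms $\delta$ are given explicitly enough that, given a word in $G_e$, one can algorithmically write down its image in $G_{t(e)}$, and that the membership algorithm for cyclic subgroups of an $F$-group from Corollary \ref{Fgrps} is genuinely uniform over the (finitely many) white vertices and their generating curves. None of these is deep, but they are what turns the structural decomposition into an actual algorithm; once they are in place the result follows formally.
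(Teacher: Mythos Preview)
Your proposal is correct and follows essentially the same route as the paper: realize $\pi_1(X_G)$ as $\pi_1\mathcal{G}$ for the graph of groups built in Section~5, observe that edge and black-vertex groups are cyclic while white-vertex groups are $F$-groups or free products of cyclic groups, invoke Lemma~\ref{cyclicsubgrps} and Corollary~\ref{Fgrps} for the membership problem with respect to cyclic subgroups, and conclude via Theorem~\ref{graphgrps}. Your write-up is somewhat more detailed than the paper's (you separate hypotheses (i) and (ii) and flag the effectiveness checks), but the argument is the same.
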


\begin{proof} From section 5 we know that $\pi(X_G)\cong \pi_1 \mathcal{G}$ for a graph of groups where the edge groups and black vertex groups are cyclic and the white vertex groups are $F$-groups or free products of finitely many cyclic groups. By Lemma \ref{cyclicsubgrps} and Corollary \ref{Fgrps} all these groups have solvable membership problem with respect to finite cyclic subgroups. Now the Theorem follows from Theorem \ref{graphgrps}.

%
\end{proof}

\section{Some Consequences}

\begin{cor} \label{s.c.} There is an algorithm to decide whether or not $\pi(X_G )$ is abelian.
\end{cor}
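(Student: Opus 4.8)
The plan is to give an explicit algorithm that, given the labeled graph $G$, decides whether $\pi_1(X_G)$ is abelian. First I would note that $\pi_1(X_G)$ is finitely presented, so it is abelian if and only if every pair of generators in the natural presentation of section 4 commutes. Since we have just shown (in the main Theorem) that $\pi_1(X_G)$ has solvable word problem, for any two generators $x,y$ we can effectively decide whether the word $[x,y]$ represents the trivial element. Running this check over all $\binom{N}{2}$ pairs of the finitely many generators $N$ gives a decision procedure: $\pi_1(X_G)$ is abelian precisely when all these commutators are trivial. This already proves the Corollary, so the essential content is just the observation that "abelianness of a finitely presented group is decidable once the word problem is" — which, unlike the word problem itself, is elementary.

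The only step requiring a moment's care is producing, from the labeled graph $G$ and a chosen maximal tree $T$, the finite generating set explicitly: the black-vertex generators $\{b\}_{b\in\mathcal B}$, the surface generators $c_1,\dots,c_p,y_1,\dots,y_n$ for each white vertex, and the stable letters $t_i$ for edges in $G-T$. All of this is read off directly from the graph and its labels, so no obstacle arises here.

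I do not expect a genuine obstacle. If one wanted a more informative (graph-theoretic) criterion rather than a brute-force commutator check, the harder part would be to characterize which labeled graphs yield abelian groups — presumably forcing the graph to be quite restricted (e.g., a single edge, or a segment with appropriate labels), and then matching $\pi_1$ against the short list of abelian $2$-stratifold groups. But for the statement as given, the black-box word-problem algorithm of the main Theorem applied to commutators of the finitely many generators suffices, and that is the proof I would write.
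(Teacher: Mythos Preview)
Your proposal is correct and matches the paper's proof essentially verbatim: check that all commutators $[x_i,x_j]$ of the finitely many generators are trivial, which is decidable because the word problem is solvable. The extra remarks about reading off the generating set from $G$ and about a possible graph-theoretic characterization are fine elaborations but go beyond what the paper does.
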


\begin{proof} $\pi(X_G )$ is abelian if and only if $[x_i ,x_j ]=1$ for $1\leq i<j\leq n]$, where $x_1 ,\dots ,x_n$ generate $\pi(X_G )$. Since the word problem is solvable, we can decide whether this is true. 
\end{proof}

A $0$-terminal edge of $X_G$ is an edge $b \stackrel{m}- w$, where $w$ is a terminal white vertex of genus $0$. The following deals with a special case of the order problem.

\begin{cor} \label{order} Let $b \stackrel{m}- w$ be a $0$-terminal edge of $G_X$. One can calculate the (finite) order $o$ of $b$ in $\pi(X_G)$.
\end{cor}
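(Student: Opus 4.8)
The plan is to first exhibit a relation in $\pi(X_G)$ that forces $b$ to be a torsion element with an explicit bound on its order, and then to pin down the order itself by finitely many calls to the word-problem algorithm. Since $w$ is a \emph{terminal} white vertex, it has exactly one incident edge, namely $b \stackrel{m}- w$, so the surface $W$ is compact of genus $0$ with a single boundary curve, i.e. a disk. In the natural presentation of section 4 the relation $(*)$ for $w$ then reads simply $c_1 = 1$ (take $p = 1$, $g = 0$, so $n = 0$ and $q = 1$); write $c = c_1$. Moreover, because $w$ has degree $1$, the edge $b \stackrel{m}- w$ lies in \emph{every} maximal tree $T$ of $G$, so after the orientation choices of section 4 its label is a positive integer and the presentation contains the relation $b^{m} = c$. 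Combining the two relations gives $b^{m} = 1$ in $\pi(X_G)$; in particular $o = o(b)$ is finite and divides $m$ (and $m \geq 1$, since $|m|$ is a summand of a partition, so this bound is nontrivial).

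With this bound in hand, $o$ is computed as follows. By the main Theorem the word problem for $\pi(X_G)$ is solvable, so for each integer $d$ with $1 \leq d \leq m$ we can effectively decide whether $b^{d} = 1$ in $\pi(X_G)$. The least such $d$ is exactly $o(b)$, and the search always terminates because $d = m$ works. This yields $o$ algorithmically.

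The only point requiring care — and it is an observation rather than a genuine obstacle — is the reduction to $b^{m} = 1$: one must use that a degree-$1$ vertex forces its incident edge into the chosen maximal tree, so that the presentation relation at that edge is $b^{m} = c$ with $m$ positive (rather than a conjugation relation $t^{-1} c t = b^{m}$ coming from a non-tree edge), together with the fact that a genus-$0$ white vertex carrying a single boundary curve is a disk and hence kills $c$. Once $b$ is known to be torsion with order dividing $m$, the remainder is just a bounded loop over the already-established word-problem decision procedure, so nothing further is needed.
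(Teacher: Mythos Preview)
Your proof is correct and follows essentially the same route as the paper: show that the disk relation at the terminal genus-$0$ vertex forces $b^{m}=1$, so $o$ divides $m$, and then invoke the solvable word problem to test the finitely many candidates. The paper states this more tersely (checking only the divisors of $m$ rather than all $d\le m$), but you have supplied exactly the justification it leaves implicit.
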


\begin{proof} $o$ is one of the (finitely many) divisors of the finite nonzero labels of $b - w$. The Corollary follows since $\pi(X_G)$ has solvable word problem.
\end{proof}

In \cite{GGH1} and \cite{GGH3} we obtained for certain classes of $2$-stratifolds $X_G$ (namely those with a linear graph $G_X$ or those that are trivalent) an {\it efficient} algorithm to decide if $X_G$ is simply connected. These algorithms can be read off from the labelled graph $G_X$. For the general case we do not yet have an {\it efficient} algorithm, but we now see that there is an algorithm:
 
\begin{cor} \label{s.c.} There is an algorithm to decide whether or not $X_G$ is simply-connected.
\end{cor}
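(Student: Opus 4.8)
The plan is to reduce the question "is $X_G$ simply connected?" to a finite sequence of decidable assertions about the presentation of $\pi(X_G)$ built in Section 4, using the solvability of the word problem established in the previous theorem. Recall that $\pi(X_G)$ is finitely presented, with an explicit finite generating set $\{x_1,\dots,x_n\}$ (the black-vertex generators $b$, the surface generators $c_i,y_j$ for each white vertex, and the stable letters $t_i$ for edges not in the maximal tree) read directly off the labelled graph $G$. Since $X_G$ is path-connected, $X_G$ is simply connected if and only if $\pi(X_G)$ is the trivial group, i.e. if and only if every generator $x_i$ equals $1$ in $\pi(X_G)$.

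First I would observe that triviality of a finitely generated group is equivalent to $x_i = 1$ for each $i=1,\dots,n$: if all generators are trivial the group is trivial, and conversely. Each of these is an instance of the word problem for $\pi(X_G)$ applied to the word $w = x_i$. By the preceding Theorem, $\pi(X_G)$ has solvable word problem, so for each $i$ there is an algorithm that decides whether $x_i = 1$ in $\pi(X_G)$. Running these $n$ algorithms in turn and returning "simply connected" precisely when all $n$ answers are affirmative gives the desired decision procedure. The input is the labelled graph $G_X$ together with a choice of maximal tree $T$ (which can itself be computed from $G$), from which the presentation, and hence the generators, are produced algorithmically.

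There is essentially no obstacle here beyond bookkeeping: the only subtlety is making sure that the presentation of Section 4 is produced effectively from $G_X$ — but this is immediate, since the generators and relators are indexed by the (finite) vertices and edges of $G$ and the edge labels, all of which are part of the combinatorial data of the labelled graph. One should also note explicitly that it suffices to test the generators rather than all words, which is why finiteness of the generating set matters. I would end by remarking that this algorithm, while correct, is not efficient in general — it inherits whatever complexity the word-problem algorithm of Theorem~2 carries through the graph-of-groups machinery — in contrast to the combinatorial criteria obtained in \cite{GGH1}, \cite{GGH3} for linear or trivalent graphs.
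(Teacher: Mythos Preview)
Your argument is correct and is essentially identical to the paper's own proof: observe that $\pi_1(X_G)$ has an explicit finite generating set $S$ read off from $G_X$, that $\pi_1(X_G)=1$ iff every $s\in S$ equals $1$, and apply the solvable word problem to each generator. Your additional remarks about effectively producing the presentation and about the lack of efficiency mirror the discussion the paper gives immediately after the corollary.
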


\begin{proof} If $S$ is a finite set of generators of $\pi=\pi_1 (X_G )$, the $\pi =1$ if and only if  $s=1$ in $\pi$ for every $s\in S$. Since $\pi$ has solvable word problem one can decide if every $s$ in $S$ is $1$.
\end{proof}

In \cite{GGH1} it was shown that a necessary condition for a $2$-stratifold $X_G$ to be simply-connected is that $G_X$ is a tree, all white vertices are of genus $0$, and all terminal edges are white. If there is an efficient algorithm for the order problem in Corollary \ref{order} then this result may be used in obtaining an efficient algorithm in Corollary \ref{s.c.} as follows:\\

If $G_X$ is not a tree or if some white vertex has nonzero genus, or if there is a black terminal vertex, then $\pi(X_G)\neq 1$.  Otherwise apply repeatedly the following ``pruning":\\

Calculate the order $o$ in $\pi(X_G)$ of $b$ where $b - w$ is a $0$-terminal edge ; if $o$ is not $1$ then $X_G$ is not simply-connected; if $o =1$ delete $b$, $w$ and all edges incident to $b$ from $X_G$. Each component $G_i$  of the resulting graph (the ``pruned" graph) corresponds to a $2$-stratifold $X_{G_i}$, and since $o(b)=1$ in $\pi(X_G)$ it follows that $X_G$ is simply-connected if and only if each $X_{G_i}$ is simply-connected. Then  $X_G$ is 1-connected if and only if we eventually obtain a graph with no edges.\\

\begin{cor} One can decide whether or not $X_G$ is homotopically equivalent to a wedge of $n$ $2$-spheres and, if so, calculate $n$.
\end{cor}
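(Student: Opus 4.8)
The plan is to reduce the statement to the previously established facts about $\pi(X_G)$ together with an elementary homological computation. First I would observe that $X_G$ is homotopy equivalent to a wedge of $n$ copies of $S^2$ if and only if $\pi(X_G)=1$ and $H_2(X_G;\mathbb{Z})$ is free of rank $n$. One direction is immediate; for the converse I would invoke the result cited in the introduction from \cite{GGH1}, namely that a simply connected $2$-stratifold is homotopy equivalent to a wedge of $2$-spheres, so that once we know $\pi(X_G)=1$ the homotopy type is determined by $H_2$, and a wedge of $2$-spheres of that type has exactly $n=\rank H_2(X_G;\mathbb{Z})$ spheres.

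Next I would use Corollary~8 (the algorithm deciding whether $X_G$ is simply connected): run it; if $X_G$ is not simply connected, output ``no''. If it is simply connected, it remains to compute $n=\rank H_2(X_G;\mathbb{Z})$, which by the universal coefficient theorem and simple connectivity equals $\rank H_2(X_G;\mathbb{Z})$ with no torsion contribution to worry about for the rank. This is a finite computation: from the labelled graph $G_X$ one writes down the chain complex of $X_G$ (or equivalently uses the Mayer--Vietoris / graph-of-spaces decomposition from section~5), whose groups and boundary maps are explicitly determined by the vertex genera and edge labels, and computes the rank of the second homology by linear algebra over $\mathbb{Z}$. Since everything is finitely presented and effectively given by $G_X$, this rank is computable.

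The main obstacle I anticipate is not the homology computation, which is routine linear algebra, but making precise the passage from ``$\pi(X_G)=1$ and $H_2$ has rank $n$'' to ``$X_G\simeq\bigvee_n S^2$.'' This requires the structural theorem of \cite{GGH1} that simply connected $2$-stratifolds are wedges of spheres; granting that, the argument is clean, since a simply connected CW-complex of dimension $2$ with free $H_2$ of rank $n$ that is already known to be a wedge of $2$-spheres must be $\bigvee_n S^2$. I would therefore state the corollary's proof as: decide simple connectivity by Corollary~8; if yes, compute $\rank H_2(X_G;\mathbb{Z})$ from $G_X$ and output that number $n$; if no, output that $X_G$ is not homotopy equivalent to any wedge of $2$-spheres, since such a wedge is simply connected.
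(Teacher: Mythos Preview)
Your argument is correct and follows the same overall strategy as the paper: decide simple connectivity using the earlier corollary, and then, in the simply connected case, read off the number of spheres. The only real difference lies in how you compute $n$. You propose to set up the chain complex of $X_G$ from the labelled graph and compute $\rank H_2(X_G;\mathbb{Z})$ by linear algebra over $\mathbb{Z}$. The paper instead quotes a sharper result from \cite{GGH2}: not only is a simply connected $2$-stratifold homotopy equivalent to a wedge of spheres, but the number of spheres is exactly $n_w - n_b$, the number of white vertices minus the number of black vertices of $G_X$. So the paper's computation of $n$ is a one-line count on the graph rather than a homology calculation. Your route is perfectly valid and has the virtue of not needing that specific formula, but the paper's is shorter because the combinatorial formula for $n$ has already been established elsewhere.
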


\begin{proof} In \cite{GGH2} it was shown that a simply-connected $2$-stratifold $X_G$ is homotopy equivalent to a wedge of $2$-spheres and moreover if $n_b$ (resp. $n_w$) denotes the number of black (resp. white) vertices of $G_X$, then $X_G$  is homotopy equivalent to a wedge of $n_w - n_b$ $2$-spheres. Now the Corollary  follows from Corollary \ref{s.c.} and, if $\pi(X_G) =1$, then $n = n_w - n_b$. 
\end{proof}

{\it Acknowledgments:} J. C. G\'{o}mez-Larra\~{n}aga would like to thank LAISLA and the TDA project from CIMAT for  financial support and IST Austria for their hospitality.

\end{document}